\numberwithin{figure}{section}
\theoremstyle{plain}
\newtheorem{thm}{\protect\theoremname}
  \theoremstyle{plain}
  \newtheorem{prop}[thm]{\protect\propositionname}
  \theoremstyle{plain}
  \newtheorem{lem}[thm]{\protect\lemmaname}
  \theoremstyle{remark}
  \newtheorem{rem}[thm]{\protect\remarkname}
\theoremstyle{plain}
\newtheorem*{criteria}{Criterion}
  \providecommand{\lemmaname}{Lemma}
  \providecommand{\propositionname}{Proposition}
  \providecommand{\remarkname}{Remark}
\providecommand{\theoremname}{Theorem}
\begin{document}

\begin{frontmatter}{}

\title{Criteria for linearized stability for a size-structured population
model}

\author{Inom Mirzaev}

\ead{mirzaev@colorado.edu}

\author{David M. Bortz}

\ead{dmbortz@colorado.edu}

\address{Department of Applied Mathematics, University of Colorado, Boulder,
CO, United States}
\begin{abstract}
We consider a size-structured aggregation and growth model of phytoplankton
community proposed by Ackleh and Fitzpatrick \cite{AcklehFitzpatrick1997}.
The model accounts for basic biological phenomena in phytoplankton
community such as growth, gravitational sedimentation, predation by
zooplankton, fecundity, and aggregation. Our primary goal in this
paper is to investigate the long-term behavior of the proposed aggregation
and growth model. Particularly, using the well-known principle of
linearized stability and semigroup compactness arguments, we provide
sufficient conditions for local exponential asymptotic stability of
zero solution as well as sufficient conditions for instability. We
express these conditions in the form of an easy to compute characteristic
function, which depends on the functional relationship between growth,
sedimentation and fecundity. Our results can be used to predict long-term
phytoplankton dynamics\end{abstract}
\begin{keyword}
\texttt{Nonlinear evolution equations, principle of linearized stability,
spectral analysis, structured populations dynamics, semigroup theory}
\end{keyword}

\end{frontmatter}{}

\section{Introduction}

Planktonic lifeforms provide a crucial source of food (organic carbon)
for many aquatic species including blue whales \cite{Reynolds1984}.
In fact, oceanic plankton collectively provide approximately $40\%$
of worlds organic carbon \cite{Falkowski1994}. The main component
of plankton community are unicellular algae called phytoplankton \cite{Lalli}.
Similar to terrestrial plants, phytoplankton make their living by
photosynthesis, and consequently inhabit surfaces of lakes and oceans.
Phytoplankton cannot swim against a current and thus form aggregated
communities at the surfaces of lakes and oceans to promote survival
and proliferation. Besides their macroscopic predators such as whales
and shrimp, aggregated phytoplankton community are also removed due
to gravitational sedimentation and zooplankton grazing on phytoplankton. 

To study the dynamic nature of phytoplankton communities various mathematical
population models have been developed. The mathematical model that
we consider in this article is the model first considered by Ackleh
and Fitzpatrick in \cite{AcklehFitzpatrick1997}. Although, the existence
and uniqueness of a global positive solution in several different
spaces have been proved in \cite{AcklehFitzpatrick1997,Ackleh1997NATMA,Lamb2009},
the long-term behavior of this model has not been investigated. This
is mainly due to the nonlinear nature of the Smoluchowski coagulation
equations used for modeling aggregation. Hence, our main goal in this
paper is to derive sufficient conditions for the local exponential
asymptotic \emph{stability} of the zero solution for the aggregation-growth
model (see Section \ref{sec:Linearized-stability-for}). In Section
\ref{sec:Linearized-instability-for}, we also derive sufficient conditions
for \emph{instability} of the zero solution. These conditions can
then be used to predict long-term phytoplankton dynamics. 

The size-structured population model proposed in \cite{AcklehFitzpatrick1997}
models growth of aggregates due to cell division and aggregation and
removal of aggregates due to sedimentation and microscopic predation
by zooplankton. In a phytoplankton community, the density of aggregates
of size $x$ at time $t$ is denoted by $p(t,\, x)$. An aggregate
is assumed to have minimum $x_{0}$ and maximum $x_{1}$ possible
sizes. In vivo, there are no aggregates of volume $0$ and the aggregates
cannot grow indefinitely, so the only biologically plausible case
is $0<x_{0}<x_{1}<\infty$. Hence, in this paper we consider the case
$0<x_{0}<x_{1}<\infty$, and postpone the analysis of the case with
$x_{0}=0$ and/or $x_{1}=\infty$ for our future papers. 

We will consider the following nonlinear partial integro-differential
equation model for the evolution of a phytoplankton population, 
\begin{equation}
\partial_{t}p(t,\, x)=-\partial_{x}(gp)+\mathcal{F}[p],\quad g(x_{0})p(t,\, x_{0})=\mathcal{K}[p](t),\quad p(0,\, x)=p_{0}(x)\in L^{1}[x_{0},\, x_{1}]\label{eq: agg and growth model}
\end{equation}
where 
\[
\mathcal{F}[p](t,\, x)=\frac{1}{2}\int_{x_{0}}^{x-x_{0}}\beta(x-y,\, y)p(t,\, x-y)p(t,\, y)\, dy-p(t,\, x)\int_{x_{0}}^{x_{1}}\beta(x,\, y)p(t,\, y)\, dy-w(x)p(t,\, x)
\]
and 
\begin{equation}
\mathcal{K}[p](t)=\int_{x_{0}}^{x_{1}}q(x)p(t,\, x)dx\,.\label{eq: entrance to single cell pop}
\end{equation}
The function $g(x)$ represents the average growth rate of the aggregate
of size $x$ due to mitosis. Specifically, when a single cell in the
aggregate of size $x$ divides into two identical parent and daughter
cells, the daughter cell enters the aggregate of size $x$ contributing
in a increase in total size. The coefficient $w(x)$ represents a
size-dependent removal rate. Biologically, aggregates can be either
removed by gravitational sedimentation or zooplankton grazing on phytoplankton
\cite{Griffin2001}. $\beta(x,\, y)$ is the aggregation kernel, which
describes the rate with which the aggregates of size $x$ and $y$
agglomerate to form an aggregate of size $x+y$. The fecundity rate
$q(x)$ in \eqref{eq: entrance to single cell pop} represents the
number of new cells that fall off an aggregate of size $x$ and enter
single cell population.

As our solution space we use $H=L^{1}[x_{0},\, x_{1}]$ with the usual
norm $\left\Vert \cdot\right\Vert _{L^{1}}$ (hereafter, just $\left\Vert \cdot\right\Vert $).
Consequently, the equation \eqref{eq: agg and growth model} can be
written as a semilinear abstract Cauchy problem (ACP)
\begin{equation}
p_{t}=\mathcal{L}[p]+\mathcal{N}[p],\qquad p(0,\, x)=b_{0}(x)\in H\,.\label{eq:Semilinear problem}
\end{equation}
 The operator $\mathcal{L}\,:\,\mathcal{D}(\mathcal{L})\subset H\to H$
is defined as 
\begin{equation}
\mathcal{L}[p](x)=-\left(g(x)p(x)\right)'-w(x)p(x)\label{eq:linear part}
\end{equation}
 with its corresponding domain
\begin{equation}
\mathcal{D}(\mathcal{L})=\left\{ \phi\in H\,|\,(g\phi)'\in H,\,(g\phi)(x_{0})=\mathcal{K}[\phi]\right\} \,.\label{eq:Second domain}
\end{equation}
The nonlinear operator $\mathcal{N}\,:\, H\to H$ is defined as 
\begin{equation}
\mathcal{N}[p]=\frac{1}{2}\int_{x_{0}}^{x-x_{0}}\beta(x-y,\, y)p(x-y)p(y)\, dy-p(x)\int_{x_{0}}^{x_{1}}\beta(x,\, y)p(y)\, dy\,.\label{eq:nonlinear part}
\end{equation}
We make the following assumptions
\begin{alignat*}{1}
 & g\in C^{1}[x_{0},\, x_{1}];\qquad\text{and }\, g(x)>0\:\text{ for }x_{0}\le x\le x_{1},\tag{\ensuremath{\mathscr{A}_{g}}}\\
 & \beta\in L^{\infty}([x_{0},\, x_{1}]\times[x_{0},\, x_{1}]);\qquad\beta(x,\, y)=\beta(y,\, x)\text{ and }\beta(x,\, y)=0\,\,\,\text{ if }x+y>x_{1}\,\tag{\ensuremath{\mathscr{A}_{\beta}}}\\
 & w\in C[x_{0},\, x_{1}]\qquad\text{and }w\ge0\text{ a.e. on }[x_{0},\, x_{1}]\,,\tag{\ensuremath{\mathscr{A}_{w}}}\\
 & q\in L^{\infty}[x_{0},\, x_{1}]\qquad\text{and }q\ge0\text{ a.e. on }[x_{0},\, x_{1}]\,.\tag{\ensuremath{\mathscr{A}_{q}}}
\end{alignat*}
Note that the restriction on $g$ states that any aggregate of size
$x\in[x_{0},\, x_{1}]$ has strictly positive growth rate. Any aggregate
growing out of the bounds are not considered in the model. The assumption
on $w(x)$ enforces continuous dependence of the removal on the size
of an aggregate and ensures that any aggregate size has a non-negative
removal rate. Having the required ingredients in hand, we present
the main result of this paper below, and demonstrate our proof in
the subsequent sections.
\begin{thm}
Under the assumptions ($\mathscr{A}_{g}$), ($\mathscr{A}_{\beta}$),
($\mathscr{A}_{w}$) and ($\mathscr{A}_{q}$), the zero solution of
the nonlinear evolution equation defined in \eqref{eq: agg and growth model}
is locally asymptotically stable if 
\[
\int_{x_{0}}^{x_{1}}\frac{q(x)}{g(x)}\exp\left(-\int_{x_{0}}^{x}\frac{w(s)}{g(s)}\, ds\right)\, dx<1\,.
\]
Moreover, the zero solution is unstable if 
\[
\int_{x_{0}}^{x_{1}}\frac{q(x)}{g(x)}\exp\left(-\int_{x_{0}}^{x}\frac{w(s)}{g(s)}\, ds\right)\, dx>1\,.
\]

\end{thm}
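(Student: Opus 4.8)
The plan is to apply the principle of linearized stability, so the first task is to identify the linearization of the semilinear problem (\ref{eq:Semilinear problem}) at the zero solution. Since the nonlinear operator $\mathcal{N}$ in (\ref{eq:nonlinear part}) is a bounded symmetric quadratic form in $p$ (each term is bilinear in its arguments and maps $H\times H\to H$ boundedly, using $\beta\in L^{\infty}$), it is continuously Fréchet differentiable near the origin with $D\mathcal{N}(0)=0$. Hence the full linearization at zero is simply the linear operator $\mathcal{L}$ of (\ref{eq:linear part})--(\ref{eq:Second domain}). The principle of linearized stability then reduces the problem to locating the spectrum $\sigma(\mathcal{L})$: the zero solution is locally exponentially asymptotically stable when the spectral bound $s(\mathcal{L})=\sup\{\operatorname{Re}\lambda:\lambda\in\sigma(\mathcal{L})\}$ is strictly negative, and unstable when $\sigma(\mathcal{L})$ meets the open right half-plane.

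I would next show that $\mathcal{L}$ generates a $C_{0}$-semigroup (via Hille--Yosida/Lumer--Phillips, or by invoking the well-posedness results of \cite{AcklehFitzpatrick1997,Ackleh1997NATMA,Lamb2009}) and, crucially, that its resolvent is compact. Compactness follows from the structure of $\mathcal{L}$: solving the resolvent equation reduces to a first-order ODE on the \emph{bounded} interval $[x_{0},x_{1}]$ together with the nonlocal boundary datum $\mathcal{K}[\cdot]$, and since $g>0$ the finite transit time $\int_{x_{0}}^{x_{1}}ds/g(s)$ yields a resolvent that factors through a compact embedding. With compact resolvent the spectrum is pure point spectrum consisting of isolated eigenvalues of finite multiplicity; these semigroup compactness arguments are precisely what guarantee that the essential spectrum does not obstruct the equality of the spectral and growth bounds, so that $s(\mathcal{L})$ is determined entirely by the eigenvalues.

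To find the eigenvalues I solve $\mathcal{L}[\phi]=\lambda\phi$. Writing $u=g\phi$, the equation becomes $u'=-\big(w(x)+\lambda\big)u/g(x)$, whence
\[
\phi(x)=\frac{u(x_{0})}{g(x)}\exp\!\left(-\int_{x_{0}}^{x}\frac{w(s)+\lambda}{g(s)}\,ds\right).
\]
Imposing the boundary condition $u(x_{0})=g(x_{0})\phi(x_{0})=\mathcal{K}[\phi]$ and cancelling $u(x_{0})\neq0$ (a nontrivial eigenfunction forces $u(x_{0})\neq0$) gives the characteristic equation $\mathcal{X}(\lambda)=1$, where
\[
\mathcal{X}(\lambda)=\int_{x_{0}}^{x_{1}}\frac{q(x)}{g(x)}\exp\!\left(-\int_{x_{0}}^{x}\frac{w(s)}{g(s)}\,ds\right)\exp\!\left(-\lambda\int_{x_{0}}^{x}\frac{ds}{g(s)}\right)dx.
\]
The quantity appearing in the theorem is exactly $\mathcal{X}(0)$. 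On the real axis $\mathcal{X}$ is strictly decreasing (the weight $(q/g)\exp(-\int w/g)$ is nonnegative and $\int_{x_{0}}^{x}ds/g>0$), with $\mathcal{X}(\lambda)\to+\infty$ as $\lambda\to-\infty$ and $\mathcal{X}(\lambda)\to0$ as $\lambda\to+\infty$; hence $\mathcal{X}(\lambda)=1$ has a unique real root $\lambda^{*}$, and $\lambda^{*}<0$ precisely when $\mathcal{X}(0)<1$ while $\lambda^{*}>0$ precisely when $\mathcal{X}(0)>1$.

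It remains to control complex eigenvalues. Since $\int_{x_{0}}^{x}ds/g(s)$ is real, for any $\lambda$ one has $|\exp(-\lambda\int_{x_{0}}^{x}ds/g)|=\exp(-\operatorname{Re}\lambda\int_{x_{0}}^{x}ds/g)$, and because the remaining weight is nonnegative this yields the key bound $|\mathcal{X}(\lambda)|\le\mathcal{X}(\operatorname{Re}\lambda)$. Consequently, if $\mathcal{X}(0)<1$ then every $\lambda$ with $\operatorname{Re}\lambda\ge0$ satisfies $|\mathcal{X}(\lambda)|\le\mathcal{X}(\operatorname{Re}\lambda)\le\mathcal{X}(0)<1$, so no such $\lambda$ can solve $\mathcal{X}(\lambda)=1$; thus $\sigma(\mathcal{L})$ lies strictly in the left half-plane, $s(\mathcal{L})<0$, and the zero solution is locally asymptotically stable. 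If instead $\mathcal{X}(0)>1$, the real root $\lambda^{*}>0$ is an eigenvalue of $\mathcal{L}$ in the open right half-plane, forcing instability. I expect the main obstacle to be the second paragraph---rigorously establishing compactness of the resolvent and hence the identity $s(\mathcal{L})=\omega_{0}(\mathcal{L})$ between spectral and growth bounds, which is what legitimizes passing from ``no eigenvalues in the right half-plane'' to genuine exponential decay of the semigroup.
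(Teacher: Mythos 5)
Your outline tracks the paper's strategy closely (linearize, note $D\mathcal{N}(0)=0$, reduce $\sigma(\mathcal{L})$ to the zeros of the characteristic function, locate the unique real root $\lambda^{*}$, and dominate complex roots via $|\mathcal{X}(\lambda)|\le\mathcal{X}(\operatorname{Re}\lambda)$), but it contains one genuine gap, and you flagged its location yourself: the step ``compactness of the resolvent and \emph{hence} the identity $s(\mathcal{L})=\omega_{0}(\mathcal{L})$.'' That implication is false for general $C_{0}$-semigroups. Compactness of $R(\lambda,\mathcal{L})$ gives $\sigma(\mathcal{L})=\sigma_{p}(\mathcal{L})$ (isolated eigenvalues of finite multiplicity), but it gives no spectral mapping theorem, and there are classical counterexamples (e.g.\ Zabczyk's) of generators with compact resolvent and purely point spectrum for which $s(\mathcal{L})<\omega_{0}(\mathcal{L})$. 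Since the principle of linearized stability (Proposition \ref{prop:linearized stability}) demands $\omega_{0}(\mathcal{L})<0$ and not merely $s(\mathcal{L})<0$, ``no eigenvalues in the closed right half-plane'' does not by itself yield exponential decay. The paper closes this gap with an input you never invoke: the resolvent \eqref{eq:resolvent operator} is a positive operator, so $\left(T(t)\right)_{t\ge0}$ is a positive semigroup on the Banach lattice $L^{1}[x_{0},x_{1}]$, and Weis's theorem for positive semigroups on $L^{1}$ then gives $s(\mathcal{L})=\omega_{0}(\mathcal{L})$ (Proposition \ref{prop:growth bound equals spectral bound}). An alternative repair would be to prove eventual compactness, or at least eventual norm continuity, of the semigroup itself --- not of the resolvent --- as the paper does in Proposition \ref{Prop:eventually compact}; that also yields spectrum-determined growth. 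One of these two inputs must be added to your second paragraph before your final stability conclusion is valid.

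The instability direction has a second, related gap. In infinite dimensions a single eigenvalue $\lambda^{*}>0$ does not by itself force nonlinear instability: Webb's instability theorem (Proposition \ref{prop:Instability theorem}) additionally requires the dominance condition \eqref{eq: instability condition}, namely that $\lambda^{*}$ strictly exceed both the real parts of the remaining discrete spectrum and the $\alpha$-growth bound $\omega_{1}(\mathcal{L})$, which controls the non-compact (essential) part of the semigroup. Your bound $|\mathcal{X}(\lambda)|\le\mathcal{X}(\operatorname{Re}\lambda)$, sharpened to a strict inequality for $\lambda\ne\lambda^{*}$ as in Remark \ref{rem:We-also-claim}, handles the discrete spectrum, but it says nothing about $\omega_{1}(\mathcal{L})$. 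The paper disposes of this by proving that $T(t)$ is compact for $t>2\Gamma(x_{1})$ (Proposition \ref{Prop:eventually compact}), whence $\omega_{1}(\mathcal{L})=-\infty$ and the condition \eqref{eq: instability condition} holds trivially. Without an estimate of this kind, your closing sentence ``forcing instability'' is unsupported.
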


\section{\label{sec:Linearized-stability-for}Linearized stability for the
zero solution}

Our main goal in this section is to give sufficient conditions for
the zero solution, $p(t,\, x)\equiv0$, to be locally asymptotically
stable. In particular, we use the linearized stability results introduced
in \cite{Webb1985}. For the convenience of readers we now present
several results, which can be found in most of the semigroup theory
books (see \cite{Engel2000} for reference). 

The growth bound, $\omega_{0}(\mathcal{A})$, of a strongly continuous
semigroup $\left(T(t)\right)_{t\ge0}$ with an infinitesimal generator
$\mathcal{A}$ is defined as
\[
\omega_{0}(\mathcal{A}):=\inf\left\{ \omega\in\mathbb{R}\,:\,\begin{array}{c}
\exists M_{\omega}\ge1\text{ such that}\\
\left\Vert T(t)\right\Vert \le M_{\omega}e^{\omega t}\text{ for all }t\ge0
\end{array}\right\} \,.
\]
In the development below we denote $D\mathcal{A}(f)$ to be the Fréchet
derivative of an operator $\mathcal{A}$ evaluated at $f$, which
is defined as 
\[
D\mathcal{A}(u)h=\mathcal{A}[u+h]-\mathcal{A}[u]+o(h),\qquad\forall u\in\mathcal{D}(\mathcal{A})\,,
\]
where $o$ is little-o operator satisfying $\left\Vert o(h)\right\Vert \le b(r)\left\Vert h\right\Vert $
with increasing continuous function\textbf{ $b\,:\,[0,\,\infty)\to[0,\,\infty),\, b(0)=0$}. 

The stability results of this section are based on the following proposition
from \cite[p.198]{Webb1985}. We refer readers to \cite{Kato1995}
for a generalized version of this proposition, which applies to a
broader range of nonlinear evolution equations.
\begin{prop}
\label{prop:linearized stability}Let $\left(T(t)\right)_{t\ge0}$
be a $C_{0}$ semigroup in the Banach space $X$ with an infinitesimal
generator $\mathcal{L}$. Let $\mathcal{N}\,:\, X\to X$ be continuously
Fréchet differentiable on $X$. Let $\bar{f}\in\mathcal{D}(\mathcal{L})$
be a stationary solution of \eqref{eq:Semilinear problem}, i.e.,
$\left(\mathcal{L}+\mathcal{N}\right)[\bar{f}]=0$. If the linearized
operator $\mathcal{L}+D\mathcal{N}(\bar{f})$ (which is the infinitesimal
generator of a $C_{0}$ semigroup by the well-known perturbation theorem)
satisfies $\omega_{0}\left(\mathcal{L}+D\mathcal{N}(\bar{f})\right)<0$,
then $\bar{f}$ is locally asymptotically stable in the following
sense: 

There exists $\eta,\, C\ge1,$ and $\alpha>0$ such that if $\left\Vert f-\bar{f}\right\Vert <\eta$,
then a unique mild solution $u(t)$ of \eqref{eq:Semilinear problem},
\[
u(t)=T(t)f+\int_{0}^{t}T(t-s)\mathcal{N}[u(s)]\, ds\,,
\]
exists for all $t\ge0$ and $\left\Vert u(t)-\bar{f}\right\Vert \le Ce^{-\alpha t}\left\Vert f-\bar{f}\right\Vert $
for all $t\ge0$. 
\end{prop}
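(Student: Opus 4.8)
The plan is to prove the proposition by a perturbation-plus-contraction argument, reducing the semilinear equation to a fixed-point problem centred at $\bar{f}$ and then exploiting the exponential decay guaranteed by $\omega_{0}(\mathcal{L}+D\mathcal{N}(\bar{f}))<0$. First I would set $v(t):=u(t)-\bar{f}$ and write $\mathcal{B}:=\mathcal{L}+D\mathcal{N}(\bar{f})$, whose $C_{0}$ semigroup I denote by $\left(S(t)\right)_{t\ge0}$; since $D\mathcal{N}(\bar{f})$ is a bounded operator and the growth bound is negative, there exist $M\ge1$ and $\alpha_{0}>0$ with $\|S(t)\|\le Me^{-\alpha_{0}t}$ for all $t\ge0$. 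The crucial preliminary step is to recast the given mild formulation (phrased through $T$ and $\mathcal{N}$) in terms of $S$: using the bounded-perturbation identity $S(t)x=T(t)x+\int_{0}^{t}T(t-s)\,D\mathcal{N}(\bar{f})\,S(s)x\,ds$ together with Fubini, one shows that $u$ solves the stated integral equation if and only if $u(t)=S(t)f+\int_{0}^{t}S(t-s)\,G[u(s)]\,ds$ with $G[w]:=\mathcal{N}[w]-D\mathcal{N}(\bar{f})w$.

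Next I would substitute $u=\bar{f}+v$ and expand. Writing $R(h):=\mathcal{N}[\bar{f}+h]-\mathcal{N}[\bar{f}]-D\mathcal{N}(\bar{f})h$ for the first-order Taylor remainder, a short computation using $(\mathcal{L}+\mathcal{N})[\bar{f}]=0$ gives $G[\bar{f}+v]=-\mathcal{B}\bar{f}+R(v)$. Because $\bar{f}\in\mathcal{D}(\mathcal{L})=\mathcal{D}(\mathcal{B})$, the classical identity $\int_{0}^{t}S(t-s)\mathcal{B}\bar{f}\,ds=S(t)\bar{f}-\bar{f}$ holds, so the stationary contributions cancel exactly and the equation collapses to
\[
v(t)=S(t)v_{0}+\int_{0}^{t}S(t-s)\,R(v(s))\,ds,\qquad v_{0}:=f-\bar{f}.
\]
This reduction is where I expect the main technical care to be required---rigorously verifying the equivalence of the two mild formulations and the cancellation of the stationary terms---but once it is in place, all remaining analysis is driven by $\|S(t)\|\le Me^{-\alpha_{0}t}$ and the smallness of $R$.

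The decisive property of $R$ is that $R(0)=0$ and, since $\mathcal{N}$ is continuously Fr\'echet differentiable, $DR(h)=D\mathcal{N}(\bar{f}+h)-D\mathcal{N}(\bar{f})$ is small near the origin: given $\epsilon>0$ there is $\delta>0$ so that $\|R(h)-R(k)\|\le\epsilon\|h-k\|$ whenever $\|h\|,\|k\|\le\delta$, and in particular $\|R(h)\|\le\epsilon\|h\|$. I would then fix $\gamma\in(0,\alpha_{0})$ and work in the weighted space $Y=\{v\in C([0,\infty);X):\|v\|_{Y}:=\sup_{t\ge0}e^{\gamma t}\|v(t)\|<\infty\}$, showing that $\Phi[v](t):=S(t)v_{0}+\int_{0}^{t}S(t-s)R(v(s))\,ds$ leaves the ball $\{\|v\|_{Y}\le\rho\}$ invariant and is a contraction there, provided $\rho\le\delta$, $\epsilon$ is chosen with $M\epsilon/(\alpha_{0}-\gamma)\le\tfrac{1}{2}$, and $\|v_{0}\|<\eta:=\rho/(2M)$; the elementary bound $\int_{0}^{t}e^{-\alpha_{0}(t-s)}e^{-\gamma s}\,ds\le e^{-\gamma t}/(\alpha_{0}-\gamma)$ supplies both the self-mapping and the contraction estimates. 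Banach's fixed-point theorem then yields a unique global mild solution with $\|v\|_{Y}\le2M\|v_{0}\|$, that is, $\|u(t)-\bar{f}\|=\|v(t)\|\le2Me^{-\gamma t}\|f-\bar{f}\|$, which is exactly the claimed conclusion with $C=2M\ge1$ and $\alpha=\gamma$.

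As an equivalent route I could instead separate a short-time existence contraction from a continuation argument, in which a Gronwall inequality applied to $e^{\alpha_{0}t}\|v(t)\|$ simultaneously furnishes the exponential bound and rules out escape from the ball $\|v\|\le\delta$ in finite time; the weighted-space version above merely packages existence, uniqueness, and decay into a single fixed point.
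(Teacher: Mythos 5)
The paper contains no proof of this proposition to compare against: it is imported verbatim from Webb's monograph (cited as \cite[p.198]{Webb1985}) and used as a black box, so your argument must be judged on its own. It is correct, and it is essentially the standard proof of the principle of linearized stability. The two load-bearing steps are both handled properly: (i) the equivalence of the mild formulation with respect to $(T,\mathcal{N})$ and with respect to $(S,\mathcal{N}-D\mathcal{N}(\bar f))$, which does follow from the bounded-perturbation variation-of-constants identity plus Fubini (boundedness of $D\mathcal{N}(\bar f)$ is automatic, being a Fr\'echet derivative, and the bounded perturbation theorem guarantees $\mathcal{B}=\mathcal{L}+D\mathcal{N}(\bar f)$ generates $S$); and (ii) the cancellation of the stationary terms, where the identity $\int_{0}^{t}S(t-s)\mathcal{B}\bar f\,ds=S(t)\bar f-\bar f$ is legitimate precisely because $\bar f\in\mathcal{D}(\mathcal{L})=\mathcal{D}(\mathcal{B})$. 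The smallness of the Taylor remainder $R$ near $0$ via continuity of $D\mathcal{N}$ and the mean value inequality, the weighted-norm self-map and contraction estimates (the bound $\int_{0}^{t}e^{-\alpha_{0}(t-s)}e^{-\gamma s}\,ds\le e^{-\gamma t}/(\alpha_{0}-\gamma)$ is right), and the resulting $\|v\|_{Y}\le 2M\|v_{0}\|$ giving $C=2M$, $\alpha=\gamma$ are all accurate. One small point to close off: Banach's theorem gives uniqueness only within the ball of the weighted space $Y$, whereas the statement asserts uniqueness of the mild solution outright; this is repaired by the standard observation that continuous differentiability makes $\mathcal{N}$ locally Lipschitz, so mild solutions are locally unique by Gronwall --- which is exactly what your final ``equivalent route'' paragraph supplies. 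For the record, Webb's own proof follows that Gronwall-plus-continuation route rather than the weighted-space contraction; the two are interchangeable here, with your version packaging existence, uniqueness, and exponential decay into a single fixed-point argument.
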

In \cite{Lamb2009} authors have proved that the linear operator $\mathcal{L}$
generates a strongly continuous semigroup. Hence, we prove the second
assumption of Proposition \ref{prop:linearized stability} below.
\begin{lem}
The nonlinear operator $\mathcal{N}$ defined in \eqref{eq:nonlinear part}
is continuously Fréchet differentiable on $H$.\end{lem}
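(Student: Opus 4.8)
The plan is to exploit the fact that $\mathcal{N}$ is a homogeneous \emph{quadratic} operator, for which continuous Fr\'{e}chet differentiability is essentially automatic once the underlying bilinear form is shown to be bounded on $H$. First I would introduce the bounded bilinear map $\tilde{\mathcal{B}}\,:\,H\times H\to H$ defined by
\[
\tilde{\mathcal{B}}[p,\,q](x)=\frac{1}{2}\int_{x_{0}}^{x-x_{0}}\beta(x-y,\,y)p(x-y)q(y)\,dy-p(x)\int_{x_{0}}^{x_{1}}\beta(x,\,y)q(y)\,dy,
\]
so that $\mathcal{N}[p]=\tilde{\mathcal{B}}[p,\,p]$ with $\mathcal{N}$ as in \eqref{eq:nonlinear part}, and then establish the key estimate $\left\Vert \tilde{\mathcal{B}}[p,\,q]\right\Vert \le C\left\Vert p\right\Vert \left\Vert q\right\Vert$ for a constant $C$ depending only on $\left\Vert \beta\right\Vert _{L^{\infty}}$.

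Granting the estimate, the derivative is obtained by bilinear expansion. Writing $\mathcal{N}[p+h]-\mathcal{N}[p]=\tilde{\mathcal{B}}[p,\,h]+\tilde{\mathcal{B}}[h,\,p]+\tilde{\mathcal{B}}[h,\,h]$, the candidate Fr\'{e}chet derivative is the bounded linear operator $D\mathcal{N}(p)h:=\tilde{\mathcal{B}}[p,\,h]+\tilde{\mathcal{B}}[h,\,p]$. The remainder is exactly $\tilde{\mathcal{B}}[h,\,h]=\mathcal{N}[h]$, and the boundedness estimate gives $\left\Vert \mathcal{N}[h]\right\Vert \le C\left\Vert h\right\Vert ^{2}$, so it is $o(h)$ in the sense required in the excerpt, with $b(r)=Cr$. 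Continuity of the derivative then follows from the same estimate applied to $D\mathcal{N}(p)h-D\mathcal{N}(q)h=\tilde{\mathcal{B}}[p-q,\,h]+\tilde{\mathcal{B}}[h,\,p-q]$, which shows that the operator norm of $D\mathcal{N}(p)-D\mathcal{N}(q)$ is at most $2C\left\Vert p-q\right\Vert$; hence $p\mapsto D\mathcal{N}(p)$ is even Lipschitz continuous.

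I expect the sole nontrivial point to be the boundedness estimate for the gain (coagulation) term. For the loss term the bound $\int_{x_{0}}^{x_{1}}|p(x)|\int_{x_{0}}^{x_{1}}|\beta(x,\,y)||q(y)|\,dy\,dx\le\left\Vert \beta\right\Vert _{\infty}\left\Vert p\right\Vert \left\Vert q\right\Vert$ is immediate from Tonelli's theorem. For the gain term I would treat the convolution-type integral by Tonelli together with the change of variables $(x,\,y)\mapsto(u,\,v)=(x-y,\,y)$, which maps the triangular region $\{x_{0}\le y\le x-x_{0},\ x\le x_{1}\}$ onto $\{u\ge x_{0},\ v\ge x_{0},\ u+v\le x_{1}\}$ with unit Jacobian, yielding
\[
\int_{x_{0}}^{x_{1}}\!\int_{x_{0}}^{x-x_{0}}|\beta(x-y,\,y)||p(x-y)||q(y)|\,dy\,dx\le\left\Vert \beta\right\Vert _{\infty}\iint_{u+v\le x_{1}}|p(u)||q(v)|\,du\,dv\le\left\Vert \beta\right\Vert _{\infty}\left\Vert p\right\Vert \left\Vert q\right\Vert .
\]
Combining the two contributions gives the estimate with $C=\tfrac{3}{2}\left\Vert \beta\right\Vert _{\infty}$. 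Apart from this change-of-variables bookkeeping, the remaining steps are purely algebraic consequences of bilinearity, so I anticipate no further difficulty; the assumption $\beta\in L^{\infty}$ from ($\mathscr{A}_{\beta}$) is precisely what makes the argument close.
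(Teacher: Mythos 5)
Your proof is correct, and it takes a genuinely different (and in some respects tidier) route than the paper. The paper simply \emph{asserts} the formula for $D\mathcal{N}(\phi)h$ and then devotes the proof to showing $u\mapsto D\mathcal{N}(u)$ is continuous: it estimates $\left|D\mathcal{N}(u_{1})h(x)-D\mathcal{N}(u_{2})h(x)\right|$ pointwise, invokes Young's inequality for convolutions on the gain terms, and integrates in $x$ to get $\left\Vert D\mathcal{N}(u_{1})-D\mathcal{N}(u_{2})\right\Vert \le(x_{1}-x_{0}+2)\left\Vert \beta\right\Vert _{\infty}\left\Vert u_{1}-u_{2}\right\Vert$. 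You instead isolate the bounded bilinear map $\tilde{\mathcal{B}}$, which buys you two things the paper's write-up glosses over: (i) the verification that the asserted formula really is the Fr\'{e}chet derivative, since the remainder in the expansion is exactly $\tilde{\mathcal{B}}[h,h]$ and your estimate gives $\left\Vert \tilde{\mathcal{B}}[h,h]\right\Vert \le C\left\Vert h\right\Vert ^{2}=o(\left\Vert h\right\Vert )$ with $b(r)=Cr$ as required by the paper's definition; and (ii) Lipschitz continuity of $p\mapsto D\mathcal{N}(p)$ with the cleaner, domain-independent constant $3\left\Vert \beta\right\Vert _{\infty}$ rather than $(x_{1}-x_{0}+2)\left\Vert \beta\right\Vert _{\infty}$ (the paper's extra factor $(x_{1}-x_{0})$ is an artifact of carrying a constant bound through a pointwise estimate before integrating). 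At bottom the two arguments rest on the same fact --- the $L^{1}$ bound for the truncated convolution, which you prove by Tonelli plus the change of variables $(u,v)=(x-y,\,y)$ and the paper outsources to Young's inequality --- so the analytic content is equivalent; your version just organizes it around the quadratic structure, which is exactly the structure that makes the lemma true.
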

\begin{proof}
The Fréchet derivative of the nonlinear operator $\mathcal{N}$ is
given explicitly as

\[
D\mathcal{N}(\phi)[h(x)]=\frac{1}{2}\int_{x_{0}}^{x-x_{0}}\beta(x-y,\, y)\left[\phi(y)h(x-y)+h(y)\phi(x-y)\right]dy-h(x)\int_{x_{0}}^{x_{1}}\beta(x,\, y)\phi(y)dy-\phi(x)\int_{x_{0}}^{x_{1}}\beta(x,\, y)h(y)dy\,.
\]
For the arbitrary functions $u_{1},\, u_{2}\in H$ we have 
\begin{alignat*}{1}
\left|D\mathcal{N}(u_{1})h(x)-D\mathcal{N}(u_{2})h(x)\right| & \le\frac{1}{2}\left\Vert \beta\right\Vert _{\infty}\int_{x_{0}}^{x_{1}}|u_{1}(y)-u_{2}(y)||h(x-y)|\, dy+\frac{1}{2}\left\Vert \beta\right\Vert _{\infty}\int_{x_{0}}^{x_{1}}|h(y)||u_{1}(x-y)-u_{2}(x-y)|\, dy\\
 & +|h(x)|\left\Vert \beta\right\Vert _{\infty}\int_{x_{0}}^{x_{1}}|u_{1}(y)-u_{2}(y)|dy+|u_{1}(x)-u_{2}(x)|\left\Vert \beta\right\Vert _{\infty}\int_{x_{0}}^{x_{1}}|h(y)|dy
\end{alignat*}
An application of Young's inequality for convolutions (see \cite[Theorem 2.24]{Adams2003})
to the first two integrals gives,
\begin{alignat*}{1}
\left|D\mathcal{N}(u_{1})h(x)-D\mathcal{N}(u_{2})h(x)\right| & \le\left\Vert \beta\right\Vert _{\infty}\left\Vert u_{1}-u_{2}\right\Vert \left\Vert h\right\Vert +|h(x)|\left\Vert \beta\right\Vert _{\infty}\left\Vert u_{1}-u_{2}\right\Vert +|u_{1}(x)-u_{2}(x)|\left\Vert \beta\right\Vert _{\infty}\left\Vert h\right\Vert \,.
\end{alignat*}
Consequently, taking the integral of both sides with respect to $x$
yields
\[
\left\Vert D\mathcal{N}(u_{1})h(x)-D\mathcal{N}(u_{2})h(x)\right\Vert \le(x_{1}-x_{0})\left\Vert \beta\right\Vert _{\infty}\left\Vert u_{1}-u_{2}\right\Vert \left\Vert h\right\Vert +\left\Vert \beta\right\Vert _{\infty}\left\Vert u_{1}-u_{2}\right\Vert \left\Vert h\right\Vert +\left\Vert u_{1}-u_{2}\right\Vert \left\Vert \beta\right\Vert _{\infty}\left\Vert h\right\Vert 
\]
for all $h\in H$. Then it follows that 
\[
\left\Vert D\mathcal{N}(u_{1})-D\mathcal{N}(u_{2})\right\Vert \le(x_{1}-x_{0}+2)\left\Vert \beta\right\Vert _{\infty}\left\Vert u_{1}-u_{2}\right\Vert \,,
\]
which in turn implies that the nonlinear operator $\mathcal{N}$ is
continuously Fréchet differentiable on $H$.
\end{proof}
For the zero solution, $\bar{f}=0$, we have $D\mathcal{N}(\bar{f})=0$.
Proposition \ref{prop:linearized stability} implies that in order
to derive a sufficient condition, we also have to show that $\omega_{0}(\mathcal{L})<0$.
To achieve that we follow the steps introduced in \cite[\S 6.1]{Engel2000}
and \cite{Greiner1988a}. First, we show that $C_{0}$ semigroup $\left(T(t)\right)_{t\ge0}$
generated by $\mathcal{L}$ is, in fact, a positive semigroup on the
\emph{Banach lattice} $H$ with the usual sense of ordering ``\textbf{$\ge$}''
and the absolute value, $|\cdot|$.%
\footnote{We refer readers to \cite{Nagel1986} for a detailed definition of
a positive semigroup on a Banach lattice.%
}
\begin{prop}
\label{prop:growth bound equals spectral bound}The linear operator
$\mathcal{L}$ on $H=L^{1}[x_{0},\, x_{1}]$ generates a positive
$C_{0}$ semigroup $\left(T(t)\right)_{t\ge0}$. Hence, the growth
bound $\omega_{0}(\mathcal{L})$ is equal to the spectral bound $s(\mathcal{L})$,
i.e., $s(\mathcal{L})=\omega_{0}(\mathcal{L})\,.$\end{prop}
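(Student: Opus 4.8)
\emph{Proof plan.} Since \cite{Lamb2009} already guarantees that $\mathcal{L}$ generates a $C_{0}$ semigroup $\left(T(t)\right)_{t\ge0}$ on $H$, the only things left to establish are (i) positivity of this semigroup and (ii) the resulting equality $s(\mathcal{L})=\omega_{0}(\mathcal{L})$. For (i) I would not argue with the semigroup directly, but use the standard resolvent criterion: a $C_{0}$ semigroup is positive if and only if its resolvent $R(\lambda,\mathcal{L})=(\lambda-\mathcal{L})^{-1}$ is a positive operator for all sufficiently large real $\lambda$. This follows from the Euler exponential formula $T(t)=\lim_{n\to\infty}\left(\tfrac{n}{t}R(\tfrac{n}{t},\mathcal{L})\right)^{n}$, because products and strong limits of positive operators remain positive. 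Thus it suffices to produce $R(\lambda,\mathcal{L})$ explicitly and inspect its sign.

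To do so I would solve $(\lambda-\mathcal{L})u=f$ for $f\in H$ and $u\in\mathcal{D}(\mathcal{L})$, i.e. $(gu)'+(\lambda+w)u=f$. The substitution $v=gu$ reduces this to the scalar linear ODE $v'+\tfrac{\lambda+w}{g}\,v=f$, which, with integrating factor $\mu_{\lambda}(x)=\exp\!\left(\int_{x_{0}}^{x}\tfrac{\lambda+w(s)}{g(s)}\,ds\right)$, has the explicit solution
\[
v(x)=\frac{1}{\mu_{\lambda}(x)}\left(v(x_{0})+\int_{x_{0}}^{x}\mu_{\lambda}(s)f(s)\,ds\right).
\]
The constant $v(x_{0})=(gu)(x_{0})$ is determined by the nonlocal boundary condition $(gu)(x_{0})=\mathcal{K}[u]=\int_{x_{0}}^{x_{1}}\tfrac{q(x)}{g(x)}v(x)\,dx$; substituting the line above and solving the resulting scalar equation gives
\[
v(x_{0})=\frac{1}{1-A(\lambda)}\int_{x_{0}}^{x_{1}}\frac{q(x)}{g(x)\mu_{\lambda}(x)}\int_{x_{0}}^{x}\mu_{\lambda}(s)f(s)\,ds\,dx,\qquad A(\lambda):=\int_{x_{0}}^{x_{1}}\frac{q(x)}{g(x)}\exp\!\left(-\int_{x_{0}}^{x}\frac{\lambda+w(s)}{g(s)}\,ds\right)dx.
\]
Observe that $A(\lambda)$ is exactly the characteristic function of the main theorem evaluated at $\lambda$, and that $A(\lambda)$ decreases to $0$ as $\lambda\to\infty$.

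With the resolvent in hand the positivity check is immediate: for $\lambda$ large enough that $A(\lambda)<1$ the prefactor $1-A(\lambda)$ is positive, and since $g>0$, $\mu_{\lambda}>0$, and $q\ge0$, all quantities appearing above are nonnegative whenever $f\ge0$. Hence $v(x_{0})\ge0$, so $v\ge0$ and $u=v/g\ge0$, proving $R(\lambda,\mathcal{L})\ge0$ for all large $\lambda$ and therefore positivity of $\left(T(t)\right)_{t\ge0}$. For the final implication I would invoke the fact that $H=L^{1}[x_{0},\,x_{1}]$ is an AL-space (its norm is order continuous), on which every positive $C_{0}$ semigroup satisfies $s(\mathcal{L})=\omega_{0}(\mathcal{L})$; this is the result recorded in \cite[\S 6.1]{Engel2000} and \cite{Greiner1988a}, and it yields the assertion.

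The main obstacle I anticipate is the nonlocal boundary condition $(gu)(x_{0})=\mathcal{K}[u]$: unlike a pointwise condition at $x_{0}$, it ties the boundary datum to a weighted integral of the entire solution, so one must solve a scalar consistency equation to isolate $v(x_{0})$ and then argue that the emergent prefactor $1/(1-A(\lambda))$ is genuinely positive. The key point is that only large $\lambda$ is required by the positivity criterion, so the possibility $A(0)>1$ in the unstable regime is harmless here. A minor auxiliary step is to confirm that the constructed $u=v/g$ indeed lies in $\mathcal{D}(\mathcal{L})$ and inverts $\lambda-\mathcal{L}$, which is direct from the ODE derivation.
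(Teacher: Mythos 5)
Your proposal is correct and takes essentially the same route as the paper: compute the resolvent explicitly, observe it is a positive operator for large real $\lambda$, deduce positivity of $\left(T(t)\right)_{t\ge0}$ from the resolvent characterization of positive semigroups (the paper cites the Characterization Theorem in \cite[p.353]{Engel2000}), and conclude $s(\mathcal{L})=\omega_{0}(\mathcal{L})$ from the theorem on positive semigroups on $L^{1}$-type lattices (the paper cites \cite{Weis1995}). If anything, your treatment of the nonlocal boundary condition is more careful than the paper's displayed formula \eqref{eq:resolvent operator}: you solve the scalar consistency equation and exhibit the prefactor $1/(1-A(\lambda))$, which is what makes the constructed operator genuinely invert $\lambda-\mathcal{L}$ on $\mathcal{D}(\mathcal{L})$.
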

\begin{proof}
For given $\lambda\in\mathbb{C}$, the resolvent operator of $\mathcal{L}$
is given explicitly by 

\begin{equation}
R(\lambda,\,\mathcal{L})\phi=\phi(x_{0})g(x_{0})\frac{q(x)}{g(x)}\exp\left(-\int_{x_{0}}^{x}\frac{\lambda+w(s)}{g(s)}\, ds\right)+\frac{1}{g(x)}\int_{x_{0}}^{x}\phi(y)\exp\left(-\int_{y}^{x}\frac{\lambda+w(s)}{g(s)}\, ds\right)dy\label{eq:resolvent operator}
\end{equation}
Using the similar reasoning to Lemma 2.4 of \cite{Lamb2009} we conclude
that for all $\lambda\in\mathbb{C}$ with $Re(\lambda)>\left\Vert g\right\Vert _{\infty}+\left\Vert q\right\Vert _{\infty}$,
the resolvent operator $R(\lambda,\,\mathcal{L})$ exists. Thus, the
resolvent set of $\mathcal{L}$ is not empty, i.e., $\rho(\mathcal{L})\not\equiv\emptyset\,.$
Then, since $g$, $w$, and $q$ are positive functions on $[x_{0},\, x_{1}]$,
it is straightforward to see that $R(\lambda,\,\mathcal{L})$ is a
positive operator whenever it exists. Consequently, the positivity
of $T(t)$ follows from the Characterization Theorem in \cite[p.353]{Engel2000}.
Since $H=L^{1}[x_{0},\, x_{1}]$ is a Banach lattice and $\left(T(t)\right)_{t\ge0}$
is a positive semigroup, the last statement of the proposition follows
from the main theorem presented in \cite{Weis1995}. 
\end{proof}

Recall that an operator $\mathcal{A}$ has a \emph{compact resolvent}
if $\rho(\mathcal{A})\ne\emptyset$ and $R(\lambda,\,\mathcal{A})$
is \emph{compact} for all $\lambda\in\rho(\mathcal{A})$. Moreover,
if one can prove that $R(\lambda,\,\mathcal{A})$ is compact for some
$\lambda\in\rho(\mathcal{A})$, then $R(\lambda,\,\mathcal{A})$ is
compact for all $\lambda\in\rho(\mathcal{A})$ . Operators which have
a compact resolvent have the very nice property that their spectrum
$\sigma(\mathcal{A})$ coincide with their point spectrum $\sigma_{p}(\mathcal{A})$,
i.e., 
\begin{equation}
\sigma(\mathcal{A})=\sigma_{p}(\mathcal{A}):=\left\{ \lambda\in\mathbb{C}\,|\,\mathcal{A}\phi=\lambda\phi\text{ for some }\phi\ne0\in X\right\} \,,\label{eq:coincides with point spectrum}
\end{equation}
where $X$ is some Banach space and $\mathcal{A}$ is an operator
on $X$ with a compact resolvent. 

Before proceeding further we first prove the following lemma, which
we use in the proof of the next proposition. 
\begin{lem}
\label{lem:compactness}For $\phi\in H$ and $\lambda_{1}\in\mathbb{C}$
with $Re(\lambda_{1})>\left\Vert g\right\Vert _{\infty}+\left\Vert q\right\Vert _{\infty}$
we define the linear operators
\[
K_{1}[\phi](x)=\phi(x_{0})g(x_{0})=\int_{x_{0}}^{x_{1}}q(y)\phi(y)\, dy
\]
and 
\[
K_{2}[\phi](x)=\int_{x_{0}}^{x}\phi(y)\exp\left(-\int_{y}^{x}\frac{\lambda_{1}+w(s)}{g(s)}\, ds\right)dy\,.
\]
Then $K_{1}$ and $K_{2}$ are compact on $H$.\end{lem}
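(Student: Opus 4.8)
The plan is to establish compactness of each operator separately, exploiting the fact that $K_1$ has one-dimensional range while $K_2$ is a Volterra-type integral operator with a bounded, continuous kernel. For $K_1$, observe that for any $\phi\in H$ the output $K_1[\phi]$ is the constant function $x\mapsto\int_{x_0}^{x_1}q(y)\phi(y)\,dy$ on $[x_0,x_1]$. Since $q\in L^\infty$ by ($\mathscr{A}_q$), the map $\phi\mapsto\int_{x_0}^{x_1}q(y)\phi(y)\,dy$ is a bounded linear functional on $H=L^1$, and $K_1$ factors as this functional composed with the embedding of $\mathbb{C}$ into $H$ sending a scalar to the corresponding constant function. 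Thus $K_1$ has rank one, hence is a finite-rank bounded operator and therefore compact; I would state this in one or two lines.

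For $K_2$, I would write $K_2[\phi](x)=\int_{x_0}^{x_1}k(x,y)\phi(y)\,dy$ with the kernel
\[
k(x,y)=\mathbf{1}_{\{y\le x\}}\,\exp\!\left(-\int_{y}^{x}\frac{\lambda_1+w(s)}{g(s)}\,ds\right),
\]
and then verify that $k$ is bounded on $[x_0,x_1]\times[x_0,x_1]$. Because $g$ is continuous and strictly positive on the compact interval (assumption ($\mathscr{A}_g$)), it is bounded below away from zero, so $1/g$ is bounded; together with $w\in C[x_0,x_1]$ and $\mathrm{Re}(\lambda_1)>\|g\|_\infty+\|q\|_\infty$, the exponent has nonpositive real part on $\{y\le x\}$, giving $|k(x,y)|\le 1$. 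A kernel that is bounded (indeed in $L^\infty$, hence in $L^\infty\big([x_0,x_1];L^\infty\big)$) on a finite-measure domain defines a compact integral operator on $L^1[x_0,x_1]$; I would cite the standard criterion for compactness of integral operators with such kernels (e.g.\ the result in \cite{Adams2003} on Hilbert--Schmidt-type or dominated-kernel operators, adapted to $L^1$).

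The main obstacle is that the clean Hilbert--Schmidt compactness argument lives naturally on $L^2$, whereas here the space is $L^1$, where boundedness of the kernel alone does not by itself hand over compactness as cheaply. I expect the key technical step to be establishing an equicontinuity-type property so that the Kolmogorov--Riesz compactness criterion in $L^1$ applies to $K_2(B)$ for $B$ the unit ball: namely, showing that $\|K_2[\phi](\cdot+h)-K_2[\phi](\cdot)\|\to0$ uniformly over $\|\phi\|\le1$ as $h\to0$, together with uniform smallness of mass near the endpoints. This uniform continuity of translates follows from the (uniform) continuity of the kernel $k(x,y)$ in the $x$-variable, which in turn rests on the continuity of $g$, $w$ and the smoothness of the exponential; so I would derive a modulus-of-continuity estimate $|k(x+h,y)-k(x,y)|\le\omega(h)$ with $\omega(h)\to0$, integrate against $|\phi(y)|\,dy$, and conclude via Kolmogorov--Riesz. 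Once equicontinuity of translates and tightness are verified uniformly on the unit ball, compactness of $K_2$ on $H$ follows, completing the proof.
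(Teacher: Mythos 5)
Your overall strategy is sound and, for $K_2$, essentially the same as the paper's: both arguments apply the Kolmogorov--Riesz criterion to the image of the unit ball, verifying boundedness, automatic tightness on the finite interval, and uniform smallness of translates coming from continuity of the exponential kernel. Where you differ is in the treatment of $K_1$: you observe that it has one-dimensional range (every output is the constant function $x\mapsto\int_{x_0}^{x_1}q(y)\phi(y)\,dy$), so it is a bounded finite-rank operator and hence compact; the paper instead runs $K_1$ through Kolmogorov--Riesz as well, noting that constancy in $x$ makes the translate condition trivial. Your rank-one argument is cleaner and more standard, and nothing is lost by it. You also correctly identify, and correctly reject, the tempting shortcut of citing a bounded-kernel compactness criterion: on $L^1$ a bounded kernel alone does not give compactness (unlike the Hilbert--Schmidt situation on $L^2$), so the Kolmogorov--Riesz route is indeed needed.

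One step in your plan is stated too strongly and would fail as written: the full kernel
$k(x,y)=\mathbf{1}_{\{y\le x\}}\exp\left(-\int_y^x \frac{\lambda_1+w(s)}{g(s)}\,ds\right)$
is \emph{not} uniformly continuous in $x$ --- it jumps from $0$ to approximately $1$ across the diagonal $y=x$ --- so there is no modulus $\omega(h)\to0$ with $|k(x+h,y)-k(x,y)|\le\omega(h)$ valid for all $y$. The fix (which is what the paper does) is to split the translate difference into two pieces: the integral over the new strip $x\le y\le x+h$, where the indicator switches on, and the integral over $[x_0,x]$, where only the smooth exponential factor changes. The second piece is exactly your modulus-of-continuity argument applied to the exponential alone; the first piece is not pointwise small, but its $L^1_x$-norm is at most $\int_{x_0}^{x_1}\int_x^{x+h}|\phi(y)|\,dy\,dx\le |h|\,\Vert\phi\Vert$ by Fubini, which is uniformly small over the unit ball. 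With this split, your argument goes through exactly as in the paper.
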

\begin{proof}
We first prove that $K_{2}$ is compact on $H$. Recall that an operator
is compact if it maps bounded sets into relatively compact sets. In
order to show that a set is relatively compact in $L^{1}$, we use
the Kolmogorov-Riesz compactness theorem (see \emph{\cite{Hanche-Olsen2010}}
for a statement of the theorem). 

Define a unit ball in $H=L^{1}[x_{0},\, x_{1}]$ as $B=\left\{ \phi\in H\,|\,\left\Vert \phi\right\Vert \le1\right\} $.
Now, we prove that $K_{2}B$ is relatively compact by proving that
each condition of the Kolmogorov-Riesz theorem applies to the set
$K_{2}B$. For a given function $\phi\in B$,
\begin{alignat*}{1}
\left\Vert K_{2}\phi(x)\right\Vert  & =\int_{x_{0}}^{x_{1}}\left|\int_{x_{0}}^{x}\phi(y)\exp\left(-\int_{y}^{x}\frac{\lambda_{1}+w(s)}{g(s)}\, ds\right)\, dy\right|\, dx\le\int_{x_{0}}^{x_{1}}\int_{x_{0}}^{x}|\phi(y)|\, dy\, dx\\
 & \le(x_{1}-x_{0})\left\Vert \phi\right\Vert \le(x_{1}-x_{0})\,.
\end{alignat*}
This implies that the set $K_{2}B$ is bounded. So the first condition
of the Kolmogorov-Riesz theorem is satisfied. Since we are working
on a finite domain $[x_{0},\, x_{1}]$, the second condition of the
Kolmogorov-Riesz theorem is satisfied by default. 

For convenience, let us define 
\[
k(x,\, y):=\exp\left(-\int_{y}^{x}\frac{\lambda_{1}+w(s)}{g(s)}\, ds\right).
\]
Observe that for $\lambda_{1}\in\mathbb{C}$ with $Re(\lambda_{1})>\left\Vert g\right\Vert _{\infty}+\left\Vert q\right\Vert _{\infty}$,
$k(x,\, y)$ is uniformly continuous on $[x_{0},\, x_{1}]\times[x_{0},\, x_{1}]$.
So for a given $\varepsilon_{1}>0$ there exist $\delta_{1}>0$ such
that
\begin{equation}
\left|k(x+h,\, y)-k(x,\, y)\right|<\varepsilon_{1}\text{ for all }|h|<\delta_{1}\text{ and }y\in[x_{0},\, x_{1}]\,.\label{eq:uniform cont}
\end{equation}
Furthermore, notice that $|k(x,\, y)|\le1$ for $x\ge y$. Consequently,
for a given $\varepsilon>0$ and $\phi\in B$ we have 
\begin{alignat*}{1}
\int_{x_{0}}^{x_{1}}\left|\int_{x_{0}}^{x+h}k(x+h,\, y)\phi(y)\, dy-\int_{x_{0}}^{x}k(x,\, y)\phi(y)\, dy\right|\, dx & \le\int_{x_{0}}^{x_{1}}\left|\int_{x}^{x+h}k(x+h,\, y)\phi(y)\, dy\right|\, dx\\
 & +\int_{x_{0}}^{x_{1}}\left|\int_{x_{0}}^{x}\left[k(x+h,\, y)-k(x,\, y)\right]\phi(y)\, dy\right|\, dx\\
 & \le\int_{x_{0}}^{x_{1}}\left|\int_{x}^{x+h}\left|k(x+h,\, y)\right|\cdot\left\Vert \phi\right\Vert \, dy\right|\, dx+\int_{x_{0}}^{x_{1}}\int_{x_{0}}^{x}\varepsilon_{1}|\phi(y)|\, dy\, dx\\
 & \le(x_{1}-x_{0})|h|\cdot\left\Vert \phi\right\Vert +\varepsilon_{1}(x_{1}-x_{0})\cdot\left\Vert \phi\right\Vert \\
 & \le(\delta_{1}+\varepsilon_{1})(x_{1}-x_{0})<\varepsilon
\end{alignat*}
for sufficiently small $\varepsilon_{1}$ and $\delta_{1}$ in \eqref{eq:uniform cont}.
This proves the third condition, and thus from the Kolmogorov-Riesz
theorem the set $K_{2}B$ is relatively compact in $H$. This in turn
implies that $K_{2}$ is a compact operator on $H$.

For the operator $K_{1}$ observe that the set $K_{1}B$ is bounded,
i.e.,
\[
\left\Vert K_{1}\phi(x)\right\Vert =\int_{x_{0}}^{x_{1}}\left|\int_{x_{0}}^{x_{1}}\phi(y)q(y)\, dy\right|\, dx\le(x_{1}-x_{0})\left\Vert q\right\Vert _{\infty}\left\Vert \phi\right\Vert \le(x_{1}-x_{0})\left\Vert q\right\Vert _{\infty}\,.
\]
Since the function $q(y)$, inside the integral, does not depend on
$x$, the third condition of the Kolmogorov-Riesz theorem follows
immediately. Hence, the operator $K_{1}$ is also compact.\end{proof}
\begin{prop}
\label{prop:Compact resolvent}The operator $\mathcal{L}$ defined
in \eqref{eq:linear part} has a compact resolvent. \end{prop}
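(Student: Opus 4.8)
The plan is to read off the compactness of $R(\lambda,\mathcal{L})$ directly from the explicit formula \eqref{eq:resolvent operator}, using the compactness of the two auxiliary operators already established in Lemma \ref{lem:compactness}. I would fix one value $\lambda_{1}\in\mathbb{C}$ with $Re(\lambda_{1})>\left\Vert g\right\Vert _{\infty}+\left\Vert q\right\Vert _{\infty}$, so that $\lambda_{1}\in\rho(\mathcal{L})$ by Proposition \ref{prop:growth bound equals spectral bound}. Using the boundary condition $g(x_{0})\phi(x_{0})=\mathcal{K}[\phi]=\int_{x_{0}}^{x_{1}}q(y)\phi(y)\,dy$ encoded in $\mathcal{D}(\mathcal{L})$, the first summand of \eqref{eq:resolvent operator} is precisely the scalar $K_{1}[\phi]$ multiplied by the fixed function $m_{1}(x):=\frac{q(x)}{g(x)}\exp\left(-\int_{x_{0}}^{x}\frac{\lambda_{1}+w(s)}{g(s)}\,ds\right)$, while the second summand is $\frac{1}{g(x)}K_{2}[\phi](x)$. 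Hence I would record the factorization
\[
R(\lambda_{1},\mathcal{L})=M_{1}K_{1}+M_{g}K_{2},
\]
where $M_{1}$ and $M_{g}$ denote multiplication by $m_{1}$ and by $1/g$, respectively.

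The next step is to verify that $M_{1}$ and $M_{g}$ are bounded operators on $H=L^{1}[x_{0},x_{1}]$, recalling that multiplication by a function $m$ acts boundedly on $L^{1}$ with operator norm $\left\Vert m\right\Vert _{\infty}$. By $\mathscr{A}_{g}$ the function $g$ is continuous and strictly positive on the compact interval $[x_{0},x_{1}]$, hence bounded below by $\min g>0$, so $1/g\in L^{\infty}$ and $M_{g}$ is bounded. For $M_{1}$ I would note that $q\in L^{\infty}$ by $\mathscr{A}_{q}$, that $1/g\in L^{\infty}$ as above, and that the exponential factor has modulus $\exp\left(-\int_{x_{0}}^{x}\frac{Re(\lambda_{1})+w(s)}{g(s)}\,ds\right)\le1$, since $Re(\lambda_{1})>0$, together with $w\ge0$ from $\mathscr{A}_{w}$ and $g>0$, makes the integrand nonnegative on $x\ge x_{0}$. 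Thus $m_{1}\in L^{\infty}$ and $M_{1}$ is bounded.

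Finally I would invoke the ideal property of the compact operators: the composition of a bounded operator with a compact one is compact, and the sum of two compact operators is compact. Since $K_{1}$ and $K_{2}$ are compact on $H$ by Lemma \ref{lem:compactness}, both $M_{1}K_{1}$ and $M_{g}K_{2}$ are compact, whence $R(\lambda_{1},\mathcal{L})$ is compact. Because the resolvent is compact for this single $\lambda_{1}\in\rho(\mathcal{L})$, the observation recalled before Lemma \ref{lem:compactness} gives that $R(\lambda,\mathcal{L})$ is compact for every $\lambda\in\rho(\mathcal{L})$, i.e., $\mathcal{L}$ has a compact resolvent. I do not expect a genuine obstacle here, since the essential analytic work---the Kolmogorov--Riesz verification---is already contained in Lemma \ref{lem:compactness}; what remains is the bookkeeping of factoring the resolvent and checking boundedness of the multipliers. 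The only point requiring a little care is the uniform bound on the exponential multiplier, which hinges on the sign of $Re(\lambda_{1})$ combined with $w\ge0$ and $g>0$.
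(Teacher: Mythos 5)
Your proposal is correct and takes essentially the same route as the paper: your factorization $R(\lambda_{1},\,\mathcal{L})=M_{1}K_{1}+M_{g}K_{2}$ is exactly the paper's decomposition $R(\lambda_{1},\,\mathcal{L})=B_{1}K_{1}+B_{2}K_{2}$ into bounded multiplication operators composed with the compact operators of Lemma \ref{lem:compactness}, followed by the same ideal-property argument and the extension from one $\lambda_{1}\in\rho(\mathcal{L})$ to all of $\rho(\mathcal{L})$. Your explicit verification that the multipliers lie in $L^{\infty}$ (via $\min g>0$, $q\in L^{\infty}$, and the unit bound on the exponential) simply spells out what the paper asserts directly from ($\mathscr{A}_{g}$) and ($\mathscr{A}_{q}$).
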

\begin{proof}
From Proposition \ref{prop:growth bound equals spectral bound} we
already know that the resolvent set of $\mathcal{L}$ is not empty.
So we only have to prove that $R(\lambda,\,\mathcal{L})$ is compact
for some $\lambda\in\rho(\mathcal{L})$. Particularly, we will prove
that $R(\lambda_{1},\,\mathcal{L})$ is compact for any $\lambda_{1}\in\mathbb{C}$
with $Re(\lambda_{1})>\left\Vert g\right\Vert _{\infty}+\left\Vert q\right\Vert _{\infty}$.
The resolvent operator defined in \eqref{eq:resolvent operator} can
be written as the sum of the compositions of linear operators 
\[
R(\lambda_{1},\,\mathcal{L})=B_{1}K_{1}+B_{2}K_{2}\,,
\]
where $K_{1}$ and $K_{2}$ are defined as in Lemma \ref{lem:compactness},
\begin{alignat*}{1}
B_{1}[\phi](x) & =\frac{q(x)}{g(x)}\exp\left(-\int_{x_{0}}^{x}\frac{\lambda_{1}+w(s)}{g(s)}\, ds\right)\phi(x)\,,
\end{alignat*}
and
\[
B_{2}[\phi](x)=\frac{1}{g(x)}\phi(x)\,.
\]
From the assumptions ($\mathscr{A}_{q}$) and ($\mathscr{A}_{g}$)
it follows that the operators $B_{1}$ and $B_{2}$ are bounded. In
Lemma \ref{lem:compactness} we have proved that the operators $K_{1}$
and $K_{2}$ are compact on $H$. Then the operator $B_{1}K_{1}$,
composition of a bounded and a compact operator, is compact (see Proposition
5.43 of \cite{Hunter2000}). Similarly, the operator $B_{2}K_{2}$
is compact. This in turn implies that the resolvent operator $R(\lambda_{1},\,\mathcal{L})$,
a linear combination of compact operators, is also compact. 
\end{proof}

The above proposition together with equation \eqref{eq:coincides with point spectrum}
implies that the spectrum of the operator $\mathcal{L}$ consists
of only eigenvalues, i.e., $\sigma(\mathcal{L})=\sigma_{p}(\mathcal{L})$.
Thus, we can now characterize the spectrum of $\mathcal{L}$ by its
eigenvalues. 
\begin{prop}
\label{prop:existence condition}For $\lambda\in\mathbb{C}$, 
\[
\lambda\in\sigma_{p}(\mathcal{L})=\sigma(\mathcal{L})\,\Leftrightarrow\,\xi(\lambda)=0\,,
\]
where 
\[
\xi(\lambda)=\int_{x_{0}}^{x_{1}}\frac{q(x)}{g(x)}\exp\left(-\int_{x_{0}}^{x}\frac{\lambda+w(s)}{g(s)}\, ds\right)\, dx-1
\]
is a characteristic function of $\mathcal{L}$. \end{prop}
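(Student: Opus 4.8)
The plan is to solve the eigenvalue equation $\mathcal{L}\phi=\lambda\phi$ explicitly and read off the condition under which a nontrivial solution exists. Since Proposition \ref{prop:Compact resolvent} establishes that $\mathcal{L}$ has a compact resolvent, equation \eqref{eq:coincides with point spectrum} already furnishes the identity $\sigma(\mathcal{L})=\sigma_p(\mathcal{L})$, so it suffices to characterize the point spectrum. Accordingly, $\lambda\in\sigma(\mathcal{L})$ precisely when there is a nonzero $\phi\in\mathcal{D}(\mathcal{L})$ satisfying $-(g\phi)'-w\phi=\lambda\phi$, with $\mathcal{D}(\mathcal{L})$ as in \eqref{eq:Second domain}.

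First I would recast the eigenvalue equation as a first-order linear ODE for the auxiliary function $u:=g\phi$. Using the definition \eqref{eq:linear part}, the equation $\mathcal{L}\phi=\lambda\phi$ becomes $u'=-\tfrac{\lambda+w}{g}\,u$, whose general solution is
\[
u(x)=u(x_0)\exp\left(-\int_{x_0}^{x}\frac{\lambda+w(s)}{g(s)}\,ds\right).
\]
Since $g(x)>0$ on $[x_0,\,x_1]$ by ($\mathscr{A}_g$), dividing by $g$ yields the explicit candidate eigenfunction
\[
\phi(x)=\frac{(g\phi)(x_0)}{g(x)}\exp\left(-\int_{x_0}^{x}\frac{\lambda+w(s)}{g(s)}\,ds\right).
\]
Using the continuity guaranteed by ($\mathscr{A}_g$) and ($\mathscr{A}_w$), one checks that any such $\phi$ automatically meets the regularity requirement $(g\phi)'\in H$ appearing in \eqref{eq:Second domain}.

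Next I would impose the boundary condition $(g\phi)(x_0)=\mathcal{K}[\phi]$ from \eqref{eq:Second domain}. Writing $c:=(g\phi)(x_0)$ and substituting the explicit formula for $\phi$ into $\mathcal{K}[\phi]=\int_{x_0}^{x_1}q(x)\phi(x)\,dx$ from \eqref{eq: entrance to single cell pop} gives
\[
c=c\int_{x_0}^{x_1}\frac{q(x)}{g(x)}\exp\left(-\int_{x_0}^{x}\frac{\lambda+w(s)}{g(s)}\,ds\right)dx=c\bigl(\xi(\lambda)+1\bigr),
\]
so that $c\,\xi(\lambda)=0$. The displayed formula for $\phi$ shows that $c=0$ forces $\phi\equiv0$; hence a nonzero eigenfunction exists if and only if $c\neq0$, which is equivalent to $\xi(\lambda)=0$. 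This settles both implications: if $\lambda\in\sigma_p(\mathcal{L})$ then the corresponding nonzero $\phi$ has $c\neq0$ and so $\xi(\lambda)=0$, while if $\xi(\lambda)=0$ the choice $c=1$ produces a genuine nonzero eigenfunction in $\mathcal{D}(\mathcal{L})$.

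The argument is essentially a direct integration, so there is no deep obstacle; the only point demanding care is the verification that the explicitly constructed $\phi$ truly belongs to $\mathcal{D}(\mathcal{L})$ — both that $(g\phi)'\in H$ and that the boundary condition is self-consistent — which is exactly where assumptions ($\mathscr{A}_g$)--($\mathscr{A}_q$) enter. I would therefore devote most of the written proof to confirming these domain conditions and to being precise about the equivalence, rather than to the ODE solution itself.
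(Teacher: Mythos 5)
Your proposal is correct and follows essentially the same route as the paper: solve the first-order eigenvalue ODE explicitly, verify the resulting function lies in $\mathcal{D}(\mathcal{L})$, and observe that the boundary condition $(g\phi)(x_0)=\mathcal{K}[\phi]$ reduces to $c\,\xi(\lambda)=0$, so a nonzero eigenfunction exists precisely when $\xi(\lambda)=0$. Your treatment is in fact slightly more explicit than the paper's on the two directions of the equivalence (the observation that $c=0$ forces $\phi\equiv0$), but the substance is identical.
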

\begin{proof}
The eigenvalue equation for the operator $\mathcal{L}$ is given by
\[
\mathcal{L}\phi-\lambda\phi=-\left(g(x)\phi(x)\right)'-w(x)\phi(x)-\lambda\phi=0\,.
\]
The solution of the above equation is given by the following eigenfunction
\begin{equation}
\phi(x)=\frac{\phi(x_{0})g(x_{0})}{g(x)}\exp\left(-\int_{x_{0}}^{x}\frac{\lambda+w(s)}{g(s)}\, ds\right)\,.\label{eq:eigenvector}
\end{equation}
We note that 
\begin{alignat*}{1}
\left\Vert \phi\right\Vert  & =\int_{x_{0}}^{x_{1}}\frac{|\phi(x_{0})g(x_{0})|}{g(x)}\exp\left(-Re(\lambda)\int_{x_{0}}^{x}\frac{1}{g(s)}\, ds\right)\exp\left(-\int_{x_{0}}^{x}\frac{w(s)}{g(s)}\, ds\right)\, dx\\
 & \le\int_{x_{0}}^{x_{1}}\frac{|\phi(x_{0})g(x_{0})|}{g(x)}\exp\left(-Re(\lambda)\int_{x_{0}}^{x}\frac{1}{g(s)}\, ds\right)\, dx\\
 & \le(x_{1}-x_{0})|\phi(x_{0})g(x_{0})|\left\Vert \frac{1}{g(x)}\right\Vert _{\infty}\exp\left(-Re(\lambda)\int_{x_{0}}^{x_{1}}\frac{1}{g(s)}\, ds\right)\,,
\end{alignat*}
and hence $\phi\in H$. From assumption ($\mathscr{A}_{g}$) we have
$g\in C^{1}[x_{0},\, x_{1}]$. This in turn implies that $g'\phi\in H$.
Analogously, we can also prove that $(g\phi)'=g'\phi+g\phi'\in H$.

Lastly, in order for $\phi\in\mathcal{D}(\mathcal{L})$ we should
have 
\[
g(x_{0})\phi(x_{0})=\mathcal{K}[\phi]=\int_{x_{0}}^{x_{1}}\phi(x_{0})g(x_{0})\frac{q(x)}{g(x)}\exp\left(-\int_{x_{0}}^{x}\frac{\lambda+w(s)}{g(s)}\, ds\right)\,,
\]
which is equivalent to 
\[
0=\xi(\lambda)=\int_{x_{0}}^{x_{1}}\frac{q(x)}{g(x)}\exp\left(-\int_{x_{0}}^{x}\frac{\lambda+w(s)}{g(s)}\, ds\right)\, dx-1
\]

\end{proof}
Since the eigenfunction $\phi$ defined in \eqref{eq:eigenvector}
is $L^{1}$, the characteristic function $\xi(\lambda)$ is a finite-valued
function for all $\lambda\in\mathbb{C}$. Moreover, when $\xi(\lambda)$
is restricted to $\mathbb{R}$ it is strictly decreasing. Furthermore,
a simple limit calculation shows that 
\[
\lim_{\lambda\to\infty}\xi(\lambda)=-1\text{ and }\lim_{\lambda\to-\infty}\xi(\lambda)=\infty\,.
\]
This in turn, from the Intermediate Value Theorem, implies that there
exists a unique $\lambda_{0}\in\mathbb{R}$ such that $\xi(\lambda_{0})=0$.
We can also guarantee that this eigenvalue $\lambda_{0}$ is negative
real number provided that we have $\xi(0)<0$.
\begin{rem}
\label{rem:We-also-claim}We also claim that the spectral bound of
$\mathcal{L}$ is equal to this $\lambda_{0}$, i.e., $s(\mathcal{L})=\lambda_{0}$.
Suppose that there exists $\lambda_{1}\in\sigma(\mathcal{L})=\sigma_{p}(\mathcal{L})\subseteq\mathbb{C}$
such that $Re(\lambda_{1})>\lambda_{0}$. On the other hand, from
Proposition \ref{prop:existence condition}, $\lambda_{1}$ should
be a zero of characteristic function. i.e. $\xi(\lambda_{1})=0$.
However, 
\begin{alignat*}{1}
1=Re(\xi(\lambda_{1}))+1 & =\int_{x_{0}}^{x_{1}}\frac{q(x)}{g(x)}\cos\left[\int_{x_{0}}^{x}\frac{Im(\lambda_{1})}{g(s)}\, ds\right]\exp\left(-\int_{x_{0}}^{x}\frac{Re(\lambda_{1})+w(s)}{g(s)}\, ds\right)\, dx\\
 & \le\int_{x_{0}}^{x_{1}}\frac{q(x)}{g(x)}\exp\left(-\int_{x_{0}}^{x}\frac{Re(\lambda_{1})+w(s)}{g(s)}\, ds\right)\, dx\\
 & <\int_{x_{0}}^{x_{1}}\frac{q(x)}{g(x)}\exp\left(-\int_{x_{0}}^{x}\frac{\lambda_{0}+w(s)}{g(s)}\, ds\right)\, dx\\
 & =1\,,
\end{alignat*}
which is a contradiction. 
\end{rem}
We summarize the discussion of this section in the following criterion. 

\begin{criteria}(Stability) The spectral bound of \emph{$\mathcal{L}$}
is a unique real number $\lambda_{0}$ such that $\xi(\lambda_{0})=0$
and hence $\omega_{0}(\mathcal{L})=s(\mathcal{L})=\lambda_{0}$. Moreover,
the zero solution of the semilinear evolution equation \eqref{eq:Semilinear problem}
is locally exponentially stable if 
\[
\xi(0)=\int_{x_{0}}^{x_{1}}\frac{q(x)}{g(x)}\exp\left(-\int_{x_{0}}^{x}\frac{w(s)}{g(s)}\, ds\right)-1<0\,.
\]
\end{criteria}

\section{\label{sec:Linearized-instability-for}Linearized instability for
the zero solution}

In this section we derive sufficient conditions for instability of
the zero solution. In particular, we use the following proposition
from \cite[p.206]{Webb1985}. 
\begin{prop}
\label{prop:Instability theorem} Let $\left(T(t)\right)_{t\ge0}$
be a $C_{0}$ semigroup in the Banach space $X$ with infinitesimal
generator $\mathcal{L}$. Let $\mathcal{N}\,:\, X\to X$ be continuously
Fréchet differentiable on $X$. Let $\bar{f}\in\mathcal{D}(\mathcal{L})$
be a stationary solution of \eqref{eq:Semilinear problem}. If there
exists $\lambda_{0}\in\sigma(\mathcal{L}+D\mathcal{N}(\bar{f}))$
such that $Re(\lambda_{0})>0$ and 
\begin{equation}
\max\left\{ \omega_{1}(\mathcal{L}+D\mathcal{N}(\bar{f})),\,\sup_{\lambda\in\sigma_{D}(\mathcal{L}+D\mathcal{N}(\bar{f}))\backslash\{\lambda_{0}\}}Re(\lambda)\right\} <Re(\lambda_{0})\,,\label{eq: instability condition}
\end{equation}
then $\bar{f}$ is an unstable equilibrium in the sense that there
exists $\varepsilon>0$ and sequence $\left\{ f_{n}\right\} $ in
$X$ such that $f_{n}\to\bar{f}$ and $\left\Vert T(n)f_{n}-\bar{f}\right\Vert \ge\varepsilon$
for $n=1,2,\dots$ .
\end{prop}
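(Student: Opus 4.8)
The plan is to exploit the spectral gap encoded in \eqref{eq: instability condition} to split the state space into a finite-dimensional \emph{unstable} part, on which the linearized flow expands at the rate $\mathrm{Re}(\lambda_0)>0$, and a complementary part on which it grows strictly more slowly; a nonlinear escape argument run through the variation-of-constants formula then produces the destabilizing sequence. Throughout, write $\mathcal{B}=\mathcal{L}+D\mathcal{N}(\bar f)$ and let $\left(S(t)\right)_{t\ge0}$ be the $C_0$ semigroup it generates (its existence is guaranteed by the perturbation theorem), and let $u(t)$ denote the nonlinear mild solution of \eqref{eq:Semilinear problem} with $u(0)=f$, so that the conclusion concerns $u(n)$.

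First I would use $\mathrm{Re}(\lambda_0)>\omega_1(\mathcal{B})$, which follows from \eqref{eq: instability condition}, to conclude that $\lambda_0$ is an isolated spectral value that is a pole of the resolvent of finite rank: above the essential growth bound $\omega_1(\mathcal{B})$ every spectral value is an eigenvalue of finite algebraic multiplicity, and the remaining inequality in \eqref{eq: instability condition} forces $\lambda_0$ to lie strictly to the right of all other spectral values. I would then introduce the Riesz spectral projection $P$ associated with $\lambda_0$, obtaining an $S(t)$-invariant splitting $X=X_+\oplus X_-$ with $X_+=PX$ finite-dimensional and $X_-=(I-P)X$.

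Next I would record the resulting exponential dichotomy. On $X_+$ the generator has spectrum $\{\lambda_0\}$, so $\|S(t)v\|\ge c\,e^{at}\|v\|$ for $v\in X_+$, where $a=\mathrm{Re}(\lambda_0)>0$ and the only correction is a harmless polynomial factor coming from the nilpotent (Jordan) part. On $X_-$ the restricted semigroup has growth bound $b:=\max\{\omega_1(\mathcal{B}),\sup_{\lambda\neq\lambda_0}\mathrm{Re}(\lambda)\}<a$, so $\|S(t)(I-P)\|\le M\,e^{b't}$ for any chosen $b'\in(b,a)$; this is exactly the point at which \eqref{eq: instability condition} is used. The remaining and main step is the nonlinear escape argument. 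Setting $v(t)=u(t)-\bar f$ and $R(v)=\mathcal{N}[\bar f+v]-\mathcal{N}[\bar f]-D\mathcal{N}(\bar f)v$, continuous Fr\'echet differentiability gives $\|R(v)\|\le\eta(\|v\|)\,\|v\|$ with $\eta(\rho)\to0$ as $\rho\to0$, and the mild solution satisfies the integral equation $v(t)=S(t)v(0)+\int_0^t S(t-s)R(v(s))\,ds$. I would take $f_n=\bar f+\tfrac1n v_0$ with $v_0\in X_+$ a fixed unit expanding vector, project this equation by $P$ and $I-P$, and run a first-exit-time argument: as long as $\|v(t)\|\le\rho$, the nonlinear term is dominated by $\eta(\rho)\|v(t)\|$, so $Pv(t)$ retains an effective growth rate arbitrarily close to $a>0$ while $(I-P)v(t)$ cannot keep pace; hence $\|v(t)\|$ reaches a fixed level $\varepsilon$ (independent of $n$) in finite time. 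Evaluating at the appropriate integer times then yields the sequence with $\|u(n)-\bar f\|\ge\varepsilon$.

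The hard part will be this last step: controlling the feedback of the nonlinearity into the unstable subspace over the long (order $\log n$) horizon on which instability becomes visible, and ruling out that the higher-order remainder cancels the linear expansion before the trajectory escapes the $\varepsilon$-ball. This is managed by working only up to the first exit from a ball of small radius $\rho$, on which $\eta(\rho)$ is as small as we please, so that $\int_0^t S(t-s)R(v(s))\,ds$ is a genuine perturbation of the dominant $e^{at}$ growth of $Pv$; the strict gap $b'<a$ then prevents the stable component from compensating, which closes the argument.
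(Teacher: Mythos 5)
You should know at the outset that the paper contains no proof of this proposition: it is quoted as a known result from Webb's monograph \cite[p.~206]{Webb1985}, and the paper's own contribution is only to verify its hypotheses for the specific operator $\mathcal{L}$ of \eqref{eq:Semilinear problem} (compact resolvent, eventual compactness so that $\omega_{1}(\mathcal{L})=-\infty$, and the characteristic equation locating the dominant eigenvalue). So there is no internal proof to compare against; what you have written is a proof sketch of the cited theorem itself, and your route is the classical one. In outline it is correct: since every spectral value with real part exceeding the $\alpha$-growth bound $\omega_{1}(\mathcal{B})$ is an isolated eigenvalue of finite algebraic multiplicity (this is exactly why $\sigma_{D}$ appears in \eqref{eq: instability condition}), the hypothesis makes $\lambda_{0}$ a strictly dominant, finite-rank spectral point; the Riesz projection $P$ then gives the invariant splitting, the dichotomy rates $a=Re(\lambda_{0})$ on $PX$ (modulo the polynomial Jordan correction, as you note) and $b'<a$ on $(I-P)X$, and the variation-of-constants formula with the superlinear remainder $R$ is the right vehicle for the escape argument.

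Two steps remain genuinely unfinished, and you should not regard them as routine. First, the escape argument: the forcing term $\int_{0}^{t}S(t-s)PR(v(s))\,ds$ involves \emph{both} components of $v$, so the assertion that $(I-P)v$ ``cannot keep pace'' has to be made rigorous, e.g.\ by proving forward invariance of the cone $\left\{ \left\Vert (I-P)v\right\Vert \le\left\Vert Pv\right\Vert \right\}$ for orbits inside the $\rho$-ball once $\eta(\rho)$ is small relative to $a-b'$, or by a maximality argument applied to $\sup_{s\le t}e^{-a's}\left\Vert v(s)\right\Vert$; as written this is a plan, not a proof. Second, the conclusion is pinned to integer times: the $n$-th datum must satisfy $\left\Vert u(n;f_{n})-\bar{f}\right\Vert \ge\varepsilon$ at time exactly $n$, whereas the first-exit argument only shows each orbit leaves a fixed ball at \emph{some} finite time, after which you have no control (it may re-enter). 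The standard repair is to use two-sided growth control along the escaping orbit (the cone condition also gives an upper bound $\left\Vert v(t)\right\Vert \le C\delta e^{a''t}$ with $a''$ close to $a$), choose $\delta_{n}\sim e^{-a''n}$ so that the first exit time satisfies $\tau_{n}\ge n$, and then take $f_{n}:=v(\tau_{n}-n)$, the point on that orbit $n$ time units before exit; then $u(n;f_{n})$ is precisely the exit point while the two-sided bounds force $f_{n}\to\bar{f}$. Your phrase ``evaluating at the appropriate integer times'' conceals exactly this construction, and without the upper growth bound it cannot be carried out.
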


The\emph{ discrete spectrum} of an operator $\mathcal{A}$ on a Banach
space $X$, denoted by $\sigma_{D}(\mathcal{A})$, is the subset of
$\lambda\in\sigma_{p}(\mathcal{A})$ such that $\lambda$ is an isolated
eigenvalue of finite multiplicity, i.e., the dimension of the set
$\left\{ \psi\in X\,:\,\mathcal{A}\psi=\lambda\psi\right\} $ is finite
and nonzero. Let $\left(T(t)\right)_{t\ge0}$ be a $C_{0}$ semigroup
on the Banach space $X$ with its generator $\mathcal{A}$. Then the
limit $\omega_{1}(\mathcal{A})=\lim_{t\to\infty}t^{-1}\log\left(\alpha[T(t)]\right)$
is called $\alpha$-growth bound of $\left(T(t)\right)_{t\ge0}$.
Here, $\alpha[T(t)]$ is a measure of non-compactness of $T(t)$.
The measure of non-compactness, introduced in a textbook \cite{Kuratowski},
associates numbers to operators (or sets), which tells how close is
an operator (or a set) to a compact operator (or set). For instance,
$\alpha[T(t)]=0$ implies that the semigroup $\left(T(t)\right)_{t\ge0}$
is eventually compact. In general, computing an explicit value of
the $\alpha-$growth bound $\omega_{1}$ is a complicated task. However,
if we can prove that the linear operator $\mathcal{L}+D\mathcal{N}(\bar{f})$
generates an eventually compact $C_{0}$ semigroup, then from \cite[Remark 4.8]{Webb1985}
it follows that $\omega_{1}(\mathcal{L}+D\mathcal{N}(\bar{f}))=-\infty$. 
\begin{prop}
\label{Prop:eventually compact} The semigroup $T(t)$ generated by
the operator $\mathcal{L}$ is eventually compact. Specifically, it
is compact for $t>2\Gamma(x_{1})$, where 
\[
\Gamma(x):=\int_{x_{0}}^{x}\frac{1}{g(s)}\, ds\,.
\]
 \end{prop}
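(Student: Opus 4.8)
The plan is to integrate the transport equation generated by $\mathcal{L}$ explicitly along characteristics and to identify a time after which the solution operator is a compact integral operator. The characteristics of $\partial_t p=-(gp)'-wp$ are the flow lines of $\dot x=g(x)$; by ($\mathscr{A}_g$) the map $\Gamma(x)=\int_{x_0}^x ds/g(s)$ is a $C^1$ increasing bijection onto $[0,\Gamma(x_1)]$, so $\Gamma(x)$ is the transit time from $x_0$ to $x$ and $\Gamma^{-1}$ is available. Working with the flux $f=gp$, which satisfies $\partial_t f+g\,\partial_x f=-wf$ and therefore simply decays by $\exp(-\int w/g)$ along each characteristic, I would first record the representation
\[
(T(t)\phi)(x)=
\begin{cases}
J(x,t)\,\phi\big(\Gamma^{-1}(\Gamma(x)-t)\big), & t<\Gamma(x),\\[2mm]
\dfrac{E_0(x)}{g(x)}\,B\big(t-\Gamma(x)\big), & t>\Gamma(x),
\end{cases}
\]
where $E_0(x):=\exp\!\big(-\int_{x_0}^{x}w(s)/g(s)\,ds\big)$, the factor $J(x,t)$ is the bounded Jacobian-times-decay coefficient, and $B(s):=g(x_0)(T(s)\phi)(x_0)$ is the influx at $x_0$. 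This is exactly the inverse Laplace transform of the resolvent \eqref{eq:resolvent operator}, the two branches corresponding to the two summands there.

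The decisive point is that for $t\ge\Gamma(x_1)$ every $x\in[x_0,x_1]$ obeys $\Gamma(x)\le\Gamma(x_1)\le t$, so the initial-data branch disappears and $(T(t)\phi)(x)=g(x)^{-1}E_0(x)\,B(t-\Gamma(x))$ depends on $\phi$ only through the scalar influx $B$. Inserting this into $B(t)=\mathcal{K}[T(t)\phi]=\int_{x_0}^{x_1}q(x)(T(t)\phi)(x)\,dx$ and substituting $s=t-\Gamma(x)$ (so that $dx=g(\Gamma^{-1}(t-s))\,ds$) converts the definition of $B$ into a renewal equation
\[
B(t)=\int_{t-\Gamma(x_1)}^{t}\kappa(t-s)\,B(s)\,ds,\qquad t>\Gamma(x_1),
\]
with kernel $\kappa$ supported in $[0,\Gamma(x_1)]$ and bounded (it is assembled from $q\in L^\infty$, $1/g\in L^\infty$, and $E_0$). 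Next I would note that $B$ is bounded on bounded time intervals, with $|B(s)|\le\|q\|_\infty\|T(s)\phi\|\le\|q\|_\infty Me^{\omega s}\|\phi\|$; since the right-hand side above is then the convolution of a fixed $L^1$ kernel with an $L^\infty$ function, continuity of translation in $L^1$ shows that $B$ is continuous on $(\Gamma(x_1),\infty)$ with a modulus of continuity bounded by $\|\phi\|$ times that of $\kappa$.

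Finally, for $t>2\Gamma(x_1)$ the surviving branch $(T(t)\phi)(x)=g(x)^{-1}E_0(x)B(t-\Gamma(x))$ samples $B$ only at arguments $t-\Gamma(x)\in[t-\Gamma(x_1),t]\subset(\Gamma(x_1),\infty)$, precisely the interval on which $B$ is continuous; hence $T(t)\phi\in C[x_0,x_1]$. As $\phi$ ranges over the unit ball of $H$, the bound on $\|B\|_\infty$ and the uniform modulus of continuity of $B$, together with the smoothness of $g^{-1}E_0$ and of $\Gamma$, make $\{T(t)\phi:\|\phi\|\le1\}$ bounded and equicontinuous in $C[x_0,x_1]$. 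By the Arzel\`a--Ascoli theorem this set is relatively compact in $C[x_0,x_1]$, and through the continuous inclusion $C[x_0,x_1]\hookrightarrow L^1[x_0,x_1]=H$ it is relatively compact in $H$; thus $T(t)$ is compact. The threshold $2\Gamma(x_1)$ is transparent in this scheme: one transit time $\Gamma(x_1)$ is needed to flush out the initial data and turn $B$ into a convolution (thereby gaining continuity), and a second $\Gamma(x_1)$ is needed so that the representation only reads $B$ on its continuous regime.

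The step I expect to be the main obstacle is making the influx $B$ and the representation above rigorous for a general $\phi\in H$: since $L^1$ functions have no intrinsic pointwise trace at $x_0$, $B$ should be defined intrinsically (e.g. as the unique $L^\infty_{loc}$ solution of the renewal equation driven by the initial-data branch for $t<\Gamma(x_1)$), and one must verify that the $p$ built from it is genuinely the mild/semigroup solution $T(t)\phi$. The accompanying quantitative work --- checking $\kappa\in L^1$, the change of variables with its Jacobian, and above all the translation-continuity estimate with constants uniform over the unit ball --- is routine but is where the care lies.
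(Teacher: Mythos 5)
Your proof is correct, and it reaches compactness by a genuinely different mechanism than the paper, although both arguments rest on the same explicit characteristic representation \eqref{eq:semigroup T(t)} of $T(t)$ (which the paper quotes from \cite{BanksKappel1989}) and on the same two-transit-time structure: one transit time $\Gamma(x_1)$ to reduce everything to the boundary influx, a second one so that only the regularized influx is sampled. The paper extracts \emph{time} regularity of individual orbits --- $u(t,\cdot)$ is continuous for $t>\Gamma(x_1)$, hence, feeding this back into the PDE \eqref{eq: simplified ACP}, continuously differentiable for $t>2\Gamma(x_1)$ --- and then converts regularity into compactness by abstract semigroup theory: eventual differentiability implies eventual norm continuity, which combined with the compact resolvent (Proposition \ref{prop:Compact resolvent}) yields eventual compactness by \cite[Lemma 4.28]{Engel2000}. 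You instead extract \emph{space} regularity uniformly over the unit ball: the renewal identity $B=\kappa*B$ valid for $t>\Gamma(x_1)$, with fixed kernel $\kappa=\left(qE_0\right)\circ\Gamma^{-1}\in L^{\infty}$ supported in $[0,\Gamma(x_1)]$, bounds the modulus of continuity of $B$ by $\sup|B|$ times the $L^1$-translation modulus of $\kappa$, a bound uniform over $\|\phi\|\le1$; Arzel\`a--Ascoli in $C[x_0,x_1]$ and the continuous embedding into $L^1$ then give compactness of $T(t)$ directly. Your route is more elementary and self-contained: it needs neither the compact resolvent nor the Engel--Nagel implication chain, and it isolates exactly the point the abstract route hides, namely that continuity of $B$ for each fixed $\phi$ comes for free from strong continuity of the semigroup but is useless for compactness --- only the convolution structure upgrades it to equicontinuity over the ball. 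The paper's route is shorter on the page because that uniformity is absorbed into the cited abstract lemma, at the cost of transparency (its orbitwise differentiability step is stated with little detail). The trace issue you flag at the end is handled exactly as you propose: define $B(s):=\mathcal{K}[T(s)\phi]$, which is a bounded functional of $T(s)\phi\in L^1$, verify the second branch of \eqref{eq:semigroup T(t)} for $\phi\in\mathcal{D}(\mathcal{L})$, and extend to all of $H$ by density and boundedness of $T(t)$.
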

\begin{proof}
We first show that the semigroup $\left(T(t)\right)_{t\ge0}$ is differentiable
for $t>2\Gamma(x_{1})$. Note that eventual differentiability implies
eventual norm continuity (see Diagram 4.26 in \cite[p.119]{Engel2000}).
Consequently, the result follows from the compactness of the resolvent
set of $\mathcal{L}$ (Proposition \ref{prop:Compact resolvent})
and \cite[Lemma 4.28]{Engel2000}. 

The following proof has been adopted from the proof of \cite[Theorem 3.1]{Farkas2007}.
The abstract Cauchy problem $u_{t}=\mathcal{L}u$ can be rewritten
as a partial differential equation
\begin{equation}
u_{t}(t,\, x)+g(x)u_{x}(t,\, x)+\left(g'(x)+w(x)\right)u(t,\, x)=0\,.\label{eq: simplified ACP}
\end{equation}
By Thoerem 2.2 in \cite{BanksKappel1989} we know that the semigroup
$\left(T(t)\right)_{t\ge0}$ generated by \emph{$\mathcal{L}$} is
given explicitly by
\begin{equation}
T(t)\varphi(x)=\begin{cases}
\varphi\left(\Gamma^{-1}\left(\Gamma(x)-t\right)\right)\frac{g\left(\Gamma^{-1}\left(\Gamma(x)-t\right)\right)}{g(x)}\exp\left(-\int_{\Gamma^{-1}\left(\Gamma(x)-t\right)}^{x}\frac{w(s)}{g(s)}\, ds\right) & t\le\Gamma(x)\\
\frac{1}{g(x)}\mathcal{K}\left(T\left(t-\Gamma(x)\right)\varphi(x)\right)\exp\left(-\int_{x_{0}}^{x}\frac{w(s)}{g(s)}\, ds\right) & \Gamma(x)<t
\end{cases}\,.\label{eq:semigroup T(t)}
\end{equation}
 Thus for $t>\Gamma(x)$, we have 
\[
u(t,\, x)=\frac{1}{g(x)}\int_{x_{0}}^{x_{1}}q(x)u(t-\Gamma(x),\, x)dx\times\exp\left(-\int_{x_{0}}^{x}\frac{w(s)}{g(s)}\, ds\right)\,.
\]
On the other hand, since $g\in C^{1}[x_{0},\, x_{1}]$ and $g(x)>0$
we have $\Gamma(x)\in C[x_{0},x_{1}]$. Therefore, $u(t,\, x)$ is
continuous both in $x$ and $t$ for $t>\Gamma(x_{1})$, where $\Gamma(x_{1})$
is the maximum of $\Gamma(x)$ on $[x_{0},\, x_{1}]$. Then, from
\eqref{eq: simplified ACP}, $u(t,\, x)$ is continuously differentiable
for $t>2\Gamma(x_{1})$.
\end{proof}
In Proposition \ref{prop:Compact resolvent} we have proved that the
spectrum of $\mathcal{L}$ consists of only eigenvalues, which can
be expressed as the zeros of the characteristic equation $\xi(\lambda)$.
Consequently, similar to Remark \ref{rem:We-also-claim}, we can show
that there exist a unique real number $\lambda_{0}>0$ such that 
\[
\sup_{\lambda\in\sigma_{D}(\mathcal{L}+D\mathcal{N}(\bar{f}))\backslash\{\lambda_{0}\}}Re(\lambda)\le\sup_{\lambda\in\sigma_{p}(\mathcal{L}+D\mathcal{N}(\bar{f}))\backslash\{\lambda_{0}\}}Re(\lambda)<Re(\lambda_{0})
\]
if and only if
\[
\xi(0)=\int_{x_{0}}^{x_{1}}\frac{q(x)}{g(x)}\exp\left(-\int_{x_{0}}^{x}\frac{w(s)}{g(s)}\, ds\right)-1>0\,.
\]
 We can now summarize the results of this section in the following
criterion.

\begin{criteria}(Instability) The $\alpha-$growth bound of the operator
$\mathcal{L}$ can be found explicitly, i.e., $\omega_{1}(\mathcal{L})=-\infty$.
Furthermore, if 
\[
\xi(0)=\int_{x_{0}}^{x_{1}}\frac{q(x)}{g(x)}\exp\left(-\int_{x_{0}}^{x}\frac{w(s)}{g(s)}\, ds\right)-1>0\,,
\]
then the zero solution of the semilinear evolution equation \eqref{eq:Semilinear problem}
is unstable in the sense described in Proposition \ref{prop:Instability theorem}.\end{criteria}

\section{\label{sec:Discussion-and-future}Concluding remarks}

Expressing stability results in terms of characteristic functions
has been popular since A. Lotka published his pioneering article on
population modeling \cite{Lotka1922}. The characteristic function,
$\xi(\lambda)$, that we derived in this paper is easy to compute.
For example, consider a hypothetical scenario for which the model
parameters are given by $x_{0}=1,\, x_{1}=1000$, $q(x)=\ln(x)$,
$g(x)=\frac{1}{10}x(1001-x)$, $w(x)=\frac{1}{1000}(x-1)^{1.17}$.
In this case the zero solution of the evolution equation \eqref{eq: agg and growth model}
is locally asymptotically stable. On the other hand, decreasing the
growth rate twofold gives an unstable zero solution. This might not
be very intuitive at first glance. However, when we decrease the growth
rate twofold, aggregates will grow slower, but new cells keep entering
single cell population at the same rate (fecundity rate $q(x)$).
Thus, to keep the zero solution stable we should also decrease the
fecundity rate (at least) twofold. Furthermore, as one might expect,
doubling the fecundity rate $q(x)$ also makes the zero solution unstable. 

As a future research, we plan to extend the results of this paper
to nontrivial solutions (stationary, self-similar, etc.) of this aggregation-growth
model as well as apply this linearization method to a size-specific
aggregation-fragmentation model considered in \cite{Bortz2008}.

\section{Acknowledgements}

Funding for this research was supported in part by grants NIH-NIGMS
2R01GM069438-06A2 and NSF-DMS 1225878.

\section*{References}

\bibliographystyle{apalike}
\bibliography{library}

\end{document}